\theoremstyle{plain}
\newtheorem{thm}{Theorem}[]
\newtheorem{prop}[thm]{Proposition}
\theoremstyle{definition}
\newtheorem{defi}[thm]{Definition}
\theoremstyle{remark}
\newtheorem{rem}[thm]{Remark}
\newtheorem{exa}[thm]{Example}
\begin{document}
\title{$\text{q}$-holonomic formulas for colored HOMFLY polynomials of 2-bridge links}
\author{Paul Wedrich}
\address{Centre for Mathematical Sciences, University of Cambridge, CB3 0WB, England}
\email{P.Wedrich@dpmms.cam.ac.uk}
\begin{abstract}
We compute q-holonomic formulas for the HOMFLY polynomials of 2-bridge links colored with one-column (or one-row) Young diagrams. 
\end{abstract}
\maketitle
\setcounter{page}{1}
%
%
%

\section{Introduction and statement of results}
The colored HOMFLY polynomial is an invariant of framed, oriented links $L$ in $S^3$ whose components are colored with Young diagrams (or alternatively partitions of integers). It takes values in the ring $\mathbb{Z}[a^{\pm 1}](q)$. In this note we mostly consider colorings by one-column Young diagrams, i.e. partitions of the form $1^r:=\underbrace{(1,\dots, 1)}_{\text{$r$ $1$s}}$ and refer to them as $r$-colorings.

Let $L$ be a framed, oriented link with components numbered $1,\dots, k$.  Let $P_{c_1,\dots, c_k}(L)$ be the colored HOMFLY polynomial of $L$ with coloring $c_i$ on the $i^{th}$ component. As expected, this infinite set $\{P_{c_1,\dots, c_k}\mid c_i\in \mathbb{N}\}$  of invariants carries only a finite amount of information. More precisely, Garoufalidis \cite{Gar} proved that $P_{c_1,\dots, c_k}(L)$ is q-holonomic in the $c_i$. 

\begin{defi} A one-parameter sequence $(f_n\mid n\in \mathbb{N})$ of polynomials in $\mathbb{Z}[a^{\pm 1}](q)$ is \emph{q-holonomic} if there exist $d \in \mathbb{N}$ and polynomials $a_l\in \mathbb{Z}[u,v]$ for $0\leq l\leq d$ such that: 
\[ \sum_{l=0}^d a_l(q,q^n) f_{n+l} =0 \quad \text{for all }n\geq 0\]

\end{defi}
This recurrence relation can be encoded as a polynomial $A\in \mathbb{Z}[a^{\pm 1},q^{\pm 1}][M]\langle L \rangle$, with $L M = q M L$. Here $L$ and $M$ are considered as operators that act on the sequence $f_*$ by $(M f_n)(q)= q^n f_n(q)$ and $(L f_n)(q)=f_{n+1}(q)$. The recurrence relation then takes the compact form $A f_*=0$. We will not need the multivariable generalization of q-holonomic sequences \cite{Zei}, \cite{GL} in this note, see part (1) of Theorem \ref{mainthm}.\\

As intermediate step for computing these recurrence relations, it suffices to find a formula for the colored HOMFLY polynomial that is a multi-dimensional sum of q-proper hypergeometric summands. Then the recursion relation can be computed algorithmically \cite{WZe}, see \cite{Gar}. Explicit q-holonomic formulas are known for torus links \cite{BMS} and finitely many twist knots \cite{Kaw}. The aim of this note is to compute explicit q-holonomic formulas for the colored HOMFLY polynomials of 2-bridge links. 

\begin{defi} A 2-bridge link is a link $L$ such that the pair $(S^3,L)$ can be split by an embedded $(S^2, \{4 \text{ points}\})$ into two pairs that are both homeomorphic to $(D^2\times [0,1],\{ 2 \text{ points}\}\times [0,1])$. 
\end{defi}
A 2-bridge link has one or two components and in the latter case they are unknots. Every 2-bridge link has a special link diagram that can be constructed as follows:
\begin{enumerate}
\item Start with the trivial tangle $\vcenter{\hbox{\includegraphics[height=0.5cm, angle=0]{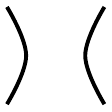}}}$.
\item Glue a finite number of crossings $\vcenter{\hbox{\includegraphics[height=0.5cm, angle=0]{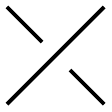}}}$ (or $\vcenter{\hbox{\includegraphics[height=0.5cm, angle=90]{Kcr.pdf}}}$) to the top endpoints.
\item Glue a finite number of crossings $\vcenter{\hbox{\includegraphics[height=0.5cm, angle=0]{Kcr.pdf}}}$ (or $\vcenter{\hbox{\includegraphics[height=0.5cm, angle=90]{Kcr.pdf}}}$) to the right endpoints.
\item Repeat (2) and (3) finitely many times. This produces a positive (or negative) rational tangle.
\item Close the rational tangle up by connecting the four endpoints by two arcs, without introducing new crossings and without making crossings nugatory.
\end{enumerate}
The sequence of natural numbers of crossings added alternately in steps (2) and (3) can be interpreted as the continued fraction expansion of a rational number $p/q\in \mathbb{Q}$ (with a minus sign in the case of negative rational tangles) and it is a well known fact that this is a complete invariant of 2-bridge links. Without loss of generality we only consider 2-bridge links that are closures of positive rational tangles. 
\begin{exa}
\[\text{The link associated to }\frac{7}{2}=[3,2]\text{ is } \quad \vcenter{\hbox{\includegraphics[height=1.5cm, angle=0]{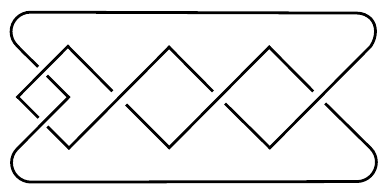}}} \]
\end{exa}
Our main result is:
\begin{thm}\label{mainthm} Let $L$ be a $2$-bridge link that is the closure of a positive rational tangle with continued fraction expansion $[a_r,\dots,a_1]$ with a total of $n$ crossings. Then the following hold:
\begin{enumerate}
\item There exists a sequence $(\tilde{P}_j(L)\in\mathbb{Z}[a^{\pm 1}, s^{\pm 1}](q) \mid j\in \mathbb{N})$, which only depends on the oriented link $L$ such that:  
\begin{align*} 
P_j(L) &= \tilde{P}_j(L)\mid_{s=1}P_j(\bigcirc) &\text{ if L is a knot.}\\
P_{i,j}(L) &= \tilde{P}_j(L)\mid_{s=q^{i-j}}P_i(\bigcirc) &\text{ if L has two components.}
\end{align*}
In particular, the colored HOMFLY polynomial of a two-component 2-bridge link, reduced with respect to color $i$, becomes independent of $i$ up to shifts in $q$-degree. 
\item $\tilde{P}_j(L)$ is given (up to multiplication by a monomial in $a$, $q$ and $s$) by the q-holonomic formula:
\[\tilde{P}_j(L) =  \sum_{i_1 = 0}^j\dots \sum_{i_{a_1} = i_{a_1-1}}^{j} \sum_{i_{a_1+1} = 0}^{i_{a_1}} \dots \sum_{i_{a_1+a_2} = 0 }^{i_{a_1+a_2-1}}\dots      \prod_{l=1}^n (-1)^{i_l} b_l(j,i_l, i_{l-1}) {n_l(j, i_k) \brack m_l(i_l, i_{l-1})} Cl[\partial L](i_n)  \]

This is a multi-dimensional sum with one index per crossing. Each index $i_l$ runs between $i_{l-1}$ and $j$ in the case of a top crossing and between $0$ and $i_{l-1}$ in the case of a right crossing. $b_l(j, i_l, i_{l-1})$ is a monic monomial in $q, a, s$ which depends only on $j$, $i_l$, $i_{l-1}$ and the boundary data of the tangle before the $l^{th}$ step of the inductive construction process. ${n_l(j, i_l) \brack m_l(i_l, i_{l-1})}$ is a $q$-binomial coefficient with $n_l(j, i_l)$ and $m_l(i_l, i_{l-1})$ depending at most linearly on $j$, $i_l$ and $i_{l-1}$. Finally $Cl[\partial L](i_n)$ is a quotient of products of $q$-Pochhammer symbols, which depends only on $j$, $i_n$ and the boundary data of the rational tangle before closing up.
\end{enumerate}

\end{thm}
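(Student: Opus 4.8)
The plan is to compute $\tilde P_j(L)$ directly from the inductive tangle construction preceding the statement, working in the framed HOMFLY skein category in which a one-column coloring $1^r$ is realized by the $r$-th antisymmetrizer idempotent (equivalently, by the exterior-power representation $\Lambda^r$ of quantum $\mathfrak{gl}_N$ under the specialization $a=q^N$). The decisive structural input is that exterior powers tensor \emph{multiplicity-freely}: $\Lambda^a\otimes\Lambda^b$ decomposes into summands labeled by a single fusion parameter (partitions with at most two columns, each with multiplicity one). Consequently, the morphism space of a partial $(2,2)$-tangle with one-column colored endpoints carries a canonical \emph{fusion basis}, a positive or negative crossing acts \emph{diagonally} in this basis, and the two bases adapted to the top versus the right boundary are related by an explicit change of coordinates. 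I would first record the two resulting building blocks: the braiding eigenvalues, which are monic monomials in $q$, $a$ and the secondary-color variable $s$, and the fusion-basis transition coefficients, which are $q$-binomial coefficients. Both are standard in the skein theory of exterior powers and can be quoted from the work of Morton, Manch\'on and Lukac.

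With these in hand, the core of the argument is an induction over the crossings in the order dictated by the continued fraction $[a_r,\dots,a_1]$. Beginning with the trivial tangle, whose invariant is a single fusion-basis vector, I would add one crossing at a time and track the evolving state. Each added crossing replaces the current label $i_{l-1}$ by a new fusion channel $i_l$ over which one must sum, and the diagonal-plus-transition structure shows that adding a crossing to the top boundary fuses a strand and forces $i_l$ to range from $i_{l-1}$ up to the primary color $j$, whereas adding a crossing to the right boundary splits strands and forces $i_l$ to range from $0$ down to $i_{l-1}$. Each step multiplies the running summand by the corresponding eigenvalue monomial $b_l(j,i_l,i_{l-1})$ and the transition $q$-binomial ${n_l\brack m_l}$, while the sign $(-1)^{i_l}$ is produced by the antisymmetry of the one-column idempotents. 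Iterating over all $n$ crossings assembles precisely the nested multi-sum of part~(2), with one index per crossing and the asserted ranges.

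It then remains to evaluate the closure of step~(5). Closing the rational tangle pairs the terminal fusion state against a fixed coevaluation (quantum trace) vector, and in the fusion basis this returns the quantum dimensions of the exterior-power summands, which are quotients of products of $q$-Pochhammer symbols; this is exactly the factor $Cl[\partial L](i_n)$, and collecting the framing and writhe corrections into the allowed overall monomial completes the formula. Since every summand so produced is q-proper hypergeometric, the formula is of the form from which a recurrence may be extracted algorithmically via \cite{WZe}, as indicated before Theorem~\ref{mainthm}; this re-derives the q-holonomicity of \cite{Gar} while exhibiting an explicit annihilating operator.

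For part~(1) the key point, which I would isolate and prove separately, is that the fusion labels $i_l$ --- hence both the $q$-binomial arguments and all summation ranges --- depend on the primary color $j$ but are \emph{insensitive} to the secondary color. The latter enters only diagonally, through the eigenvalue monomials $b_l$, in which it always appears as a power of $q^{\,\text{color}}$; packaging this as the single variable $s$ makes $\tilde P_j(L)$ a well-defined invariant of the oriented link alone. The specializations then reflect the closure: for a knot the two arcs join the strands into one loop, forcing equal colors and hence $s=1$, while for a two-component link the second unknot retains its own color $i$, contributing exactly $s=q^{i-j}$ after reduction by $P_i(\bigcirc)$. The main obstacle is precisely this insensitivity claim: one must verify, uniformly across both crossing types and through the closure, that the secondary color never migrates into a fusion label or binomial argument but only into the eigenvalues. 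Controlling this --- equivalently, showing that the reduced invariant genuinely collapses to a function of $j$ and $s$ --- is the step that requires the most care and carries the whole statement.
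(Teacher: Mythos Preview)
Your overall architecture---evaluate the rational tangle inductively in the four-point skein module, then close off via quantum dimensions---matches the paper's, and your treatment of part~(1) is in the same spirit (the paper simply cites \cite{Wed} for the link case). But there is an internal inconsistency in your handling of part~(2) that prevents the argument from producing the stated formula.

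You assert that a crossing acts \emph{diagonally} in the fusion basis, and this is correct: since $\Lambda^i\otimes\Lambda^j$ is multiplicity-free, Schur's lemma forces a single crossing to preserve the fusion channel and act by a scalar eigenvalue. But precisely for this reason, a block of $a_1$ consecutive top crossings contributes only a product of $a_1$ eigenvalues and \emph{no} new summation index---the fusion label does not move. New sums appear only at the transitions between top and right blocks, where you change fusion basis. Your scheme therefore naturally yields a formula with $r$ nested sums (one per block in the continued fraction $[a_r,\dots,a_1]$), not $n$ sums (one per crossing) as the theorem asserts and as your own inductive narrative (``each added crossing replaces the current label $i_{l-1}$ by a new fusion channel $i_l$ over which one must sum'') claims. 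The two claims---diagonality and one index per crossing---are incompatible.

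The paper sidesteps this by working not in the representation-theoretic fusion (eigen-)basis but in the MOY web basis of \cite{MOY}: six families of trivalent-graph generators $UP$, $UPs$, $OP$, $OPs$, $RI$, $RIs$, one family per boundary orientation pattern, each indexed by $0\le k\le j$. In this basis the single-crossing operators $T$ and $R$ are \emph{triangular} rather than diagonal---each $T$ sends index $k$ to a sum over $h$ with $k\le h\le j$, each $R$ to a sum over $h$ with $0\le h\le k$---and it is this triangularity that manufactures exactly one $q$-binomial and one summation index per crossing (Proposition~\ref{twistrules}). The closure is then a direct MOY evaluation (Proposition~\ref{closeprop}). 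To repair your argument you must either switch to such a web-type basis and drop the diagonality claim, or else prove a variant of part~(2) with $r$ rather than $n$ summation indices; the latter would be a legitimate (and more economical) $q$-holonomic formula, but it is not the one recorded in the theorem.
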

Theorem \ref{mainthm} provides an explicit q-holonomic formula for the $r$-colored HOMFLY polynomials of $2$-bridge links. An implementation in Wolfram Mathematica can be found on the author's website \url{https://www.dpmms.cam.ac.uk/~pw360/}.

\begin{rem}
The HOMFLY polynomials $P_{r^t}(L)$ with respect to colorings with one-row partitions with $r$ boxes are related to $P_r(L)$ as follows:
\[P_{r^t}(L)(a,q) = (-1)^r P_r(L)(a,q^{-1})\]
This is proved, for example, in \cite{LP} Lemma 4.2.
\end{rem}

\subsection*{Acknowledgements}
This note is a follow-up to \cite{Wed}, where we compute categorified $sl(N)$ invariants of positive rational tangles and verify conjectures of Gukov and Sto\v{s}i\'c in the setting of tangles, see also \cite{GS}, \cite{GNSS}. I would like to thank Jacob Rasmussen for many interesting and fruitful discussions and Stavros Garoufalidis for his encouragement to write this note.
\footnote{The author's PhD studies at the Department of Pure Mathematics and Mathematical Statistics, University of Cambridge, are supported by the ERC grant ERC-2007-StG-205349 held by Ivan Smith and an EPSRC department doctoral training grant.}

\section{Proof of Theorem \ref{mainthm}}
(1) is well known for knots; in the more interesting case of links the statement holds for arbitrary colored links with an unknot component and is proved in section 5 of \cite{Wed}. We prove (2) in two steps. First we use the replacement rules of \cite{MOY} to evaluate the rational tangle associated to $[a_r,\dots, a_1]$ in an appropriate skein module and expand in a distinguished basis. This can be done by induction on the number of crossings and in the $l^{th}$ step we get a new one-parameter sum over $i_l$, a monomial $b_l(j, i_l, i_{l-1})$ and a q-binomial coefficient ${n_l(j, i_l) \brack m_k(i_l, i_{l-1})}$. Second, we replace basis elements in the expansion by the $i$-reduced MOY evaluations of their closures, this accounts for the factor $Cl[\partial L](i_n)$.

\subsection*{Step 1: evaluating a rational tangle}

Via the crossing replacement rules of \cite{MOY} a rational tangle can be interpreted as an element of the HOMFLY skein of a disc with four boundary points whose colors and orientations are determined by the tangle. Such a skein is a module over the base ring $\mathbb{Z}[a^{\pm 1}, s^{\pm 1}](q)$ spanned by planar, oriented, trivalent graphs with a flow on the edges, modulo local relations and boundary preserving isotopy, see e.g. \cite{Wed} section 2 for details. In the case of colors $i,j$ with $i\geq j$ each of these skeins is a free module of rank $j+1$ over the base ring with basis elements (indexed by $0\leq k\leq j$) as shown in Figure \ref{fig1}. The Figure also displays the four different closure operations and the linear operators $T$ and $R$ given by composing a skein element with a crossing on top or on the right. Throughout we assume $i\geq j$ and that the lower left strand is incoming and colored by $i$. 
 \begin{figure}[h]
\centerline{
\begindc{\commdiag}[4]
\obj(35,38)[1a]{$\langle UP[i,j,k]=\vcenter{\hbox{\includegraphics[height=1.4cm, angle=0]{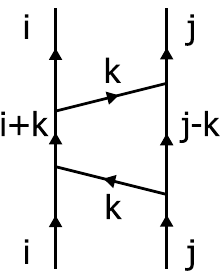}}}\rangle$}
\obj(85,38)[1b]{$\langle UPs[i,j,k]=\vcenter{\hbox{\includegraphics[height=1.4cm, angle=0]{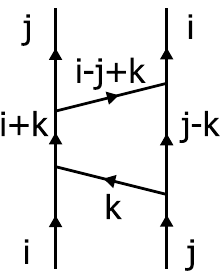}}}\rangle$}
\obj(20,19)[2a]{$\langle OP[i,j,k]= \vcenter{\hbox{\includegraphics[height=1.4cm, angle=0]{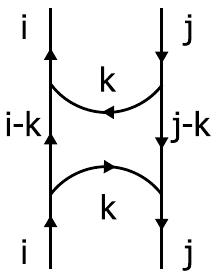}}}\rangle$}
\obj(35,0)[2b]{$\langle RIs[i,j,k]=\vcenter{\hbox{\includegraphics[height=1.4cm, angle=0]{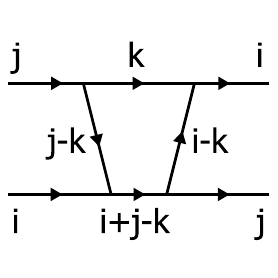}}}\rangle$}
\obj(85,0)[2c]{$\langle RI[i,j,k]=\vcenter{\hbox{\includegraphics[height=1.4cm, angle=0]{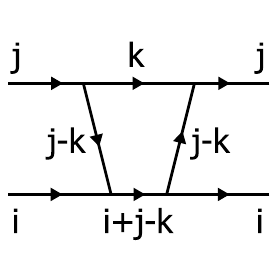}}}\rangle$}
\obj(100,19)[2d]{$\langle OPs[i,j,k]=\vcenter{\hbox{\includegraphics[height=1.4cm, angle=0]{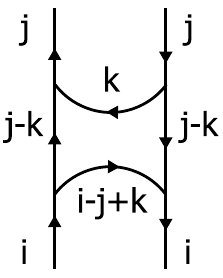}}}\rangle$}
\obj(60,19)[3]{$\mathbb{Z}[a^{\pm 1}, s^{\pm 1}](q)$}
\mor{1a}{1b}{$T$}[\atleft,\doubleopposite]
\mor{1a}{2a}{$R$}[\atright,\doubleopposite]
\mor{2a}{2b}{$T$}[\atright,\doubleopposite]
\mor{2b}{2c}{$R$}[\atright,\doubleopposite]
\mor{2c}{2d}{$T$}[\atright,\doubleopposite]
\mor{1b}{2d}{$R$}[\atleft,\doubleopposite]
\mor{1a}{3}{$Cl$}[\atleft,0]
\mor{2a}{3}{$Cl$}[\atleft,0]
\mor{2c}{3}{$Cl$}[\atleft,0]
\mor{2d}{3}{$Cl$}[\atleft,0]
\enddc
}
\caption{}
\label{fig1}
\end{figure}\\
The skein element of the rational tangle associated to $[a_r,\dots, a_1]$ is thus the image of the trivial tangle $UP[i,j,0]$ or $OP[i,j,0]$ (depending on the chosen orientation) under the linear operator $\cdots T^{a_3}R^{a_2}T^{a_1}$. The following Proposition describes the action of the operators $T$ and $R$ on the basis elements shown in Figure \ref{fig1}, up to overall multiplication by a monomial in $a$, $q$, and $s$. We write the first equation graphically and use more compact notation for the other eleven equations.
\begin{prop}
\label{twistrules}
\begin{enumerate}
\item $
\underbrace{\vcenter{\hbox{\includegraphics[height=1.4cm, angle=0]{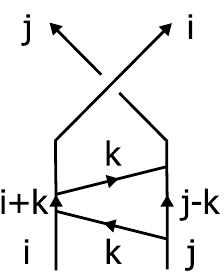}}}}_{TUP[i,j,k]}=\sum_{h=k}^j (-1)^{h} s^{k} q^{h(k+1)} {h \brack k} \underbrace{\vcenter{\hbox{\includegraphics[height=1.4cm, angle=0]{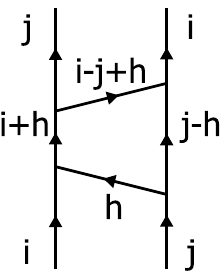}}}}_{UPs[i,j,h]} $
\item $TUPs[i,j,k]=\sum_{h=k}^j (-1)^{h} s^{h} q^{h(k+1)} {h \brack k} UP[i,j,h]$
\item $TOPs[i,j,k]=\sum_{h=k}^j (-1)^{h} a^k s^{h-k} q^{-2j k+h(k+1)} {h \brack k} RI[i,j,h] $
\item $TOP[i,j,k]=\sum_{h=k}^j (-1)^{h} a^k s^{-k} q^{-2j k+h(k+1)} {h \brack k} RIs[i,j,h] $
\item $TRI[i,j,k]=\sum_{h=k}^j (-1)^{h} a^h s^{k-h} q^{h(k+1-2j)} {h \brack k} OPs[i,j,h] $
\item $TRIs[i,j,k]=\sum_{h=k}^j (-1)^{h} a^h s^{-h} q^{h(k+1-2j)} {h \brack k} OP[i,j,h] $
\item $RUP[i,j,k]=\sum_{h=0}^k (-1)^{h} a^h s^{k-h}q^{j k+h(1-k-h)} {j-h \brack k-h} OP[i,j,h]$
\item $RUPs[i,j,k]=\sum_{h=0}^k (-1)^{h} a^h s^{-h}q^{j k+h(1-k-h)} {j-h \brack k-h} OPs[i,j,h]$
\item $ROP[i,j,k]=\sum_{h=0}^k (-1)^{h} a^k s^{h-k}q^{-j k + h(1+j-k)} {j-h \brack k-h} UP[i,j,h]$
\item $ROPs[i,j,k]=\sum_{h=0}^k (-1)^{h} a^k s^{-k}q^{-j k + h(1+j-k)} {j-h \brack k-h} UPs[i,j,h]$
\item $RRI[i,j,k]=\sum_{h=0}^k (-1)^{h} s^{k}q^{j k +h(1+j-k) } {j-h \brack k-h} RIs[i,j,h]$
\item $RRIs[i,j,k]=\sum_{h=0}^k (-1)^{h} s^{h}q^{j k + h(1+j-k)} {j-h \brack k-h} RI[i,j,h]$
\end{enumerate}

\end{prop}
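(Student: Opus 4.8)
The plan is to resolve the single crossing that defines $T$ (respectively $R$) into a $\mathbb{Z}[a^{\pm 1},s^{\pm 1}](q)$-linear combination of trivalent MOY graphs, to compose this combination with the given basis element, and then to rewrite the resulting graph back into the standard basis of Figure \ref{fig1} using the local relations of \cite{MOY}. Concretely, a positive crossing of the $i$- and $j$-colored strands expands, via the crossing replacement rule, as a sum indexed by the thickness $t$ of a new ladder rung joining the two strands, each term weighted by a monomial in $a$, $q$ and a sign. Substituting this expansion into, say, the crossing on top of $UP[i,j,k]$ produces a diagram with two stacked ladder rungs, of thicknesses $k$ and $t$, running between the $i$- and $j$-colored rails.

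The second step is to merge the two stacked rungs into a single rung of thickness $h$. This is controlled by the associativity (square-switch) relation of the MOY calculus, which rewrites adjacent rungs as a sum over $h$ whose coefficients are exactly the Gaussian binomials ${h \brack k}$; the constraints $k \leq h \leq j$ produced by this relation fix the summation range $\sum_{h=k}^j$ appearing in equations (1)--(6). The powers of $q$ in each summand arise by combining the degree shift introduced by this relation with the monomial weights of the crossing expansion, while the powers of $a$ and $s$ record the flow data transported across the trivalent vertices. Because the Proposition only asserts the identities up to an overall monomial in $a$, $q$, $s$, it suffices to track relative coefficients within each sum, and the quadratic exponents such as $q^{h(k+1)}$ and the $j$-dependent shifts $q^{-2jk}$ are then read off from the degree conventions fixed in \cite{Wed}.

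All twelve identities follow from the same mechanism applied to the six basis configurations and the two crossing types, so in practice I would prove one or two representatives in full detail and then exploit the evident symmetries of the hexagon in Figure \ref{fig1} to transport coefficients to the remaining cases. The $R$-equations (7)--(12) should be related to the $T$-equations (1)--(6) by the symmetry of the disc that exchanges its top and right edges and the induced relabeling of configurations, while the paired equations within each family, such as (1)/(2), (3)/(4) and (5)/(6), should be related by the change of basis between the decorated and undecorated configurations ($UP \leftrightarrow UPs$, $OP \leftrightarrow OPs$, $RI \leftrightarrow RIs$), which shifts the $s$-gradings. Care is needed to check that each such symmetry respects the normalization $i \geq j$ and the chosen orientation of the lower-left strand, so that it acts exactly by relabeling and $s$-substitution.

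The principal obstacle is the precise coefficient bookkeeping in the reduction step. Obtaining the quadratic $q$-exponents and the exact $a$- and $s$-powers requires fixing mutually consistent normalizations for the trivalent vertices, the crossing expansion, and the ladder basis, and then propagating the resulting degree shifts through the square-switch relation without error. A secondary subtlety is ensuring that any symmetry used to pass between the twelve cases interacts correctly with these normalization choices, so that it contributes at most the overall monomial in $a$, $q$, $s$ that the Proposition already allows, rather than an index-dependent discrepancy.
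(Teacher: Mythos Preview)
Your proposal is correct and is precisely what the paper's one-line proof ``Elementary \cite{MOY} skein theory, or decategorification ($t=-1$) of computations in section 3.3.\ of \cite{Wed}'' is pointing to: expand the crossing via the MOY replacement rule, then use the local ladder relations to re-express the stacked rungs in the chosen basis, keeping track of the resulting $q$-binomials and degree shifts. The paper does not spell out the symmetry argument you sketch for transporting coefficients between the twelve cases, but it is a reasonable shortcut and the caveats you flag (compatibility with $i\geq j$ and with the normalizations) are exactly the points that would need checking.
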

\begin{proof}
Elementary \cite{MOY} skein theory, or decategorification ($t=-1$) of computations in section 3.3. of \cite{Wed}.
\end{proof}
Note that the $b_l(j, i_l=h,i_{l-1}=k)$, $n_l(j, i_l=h)$ and $m_l(i_l=h, i_{l-1}=k)$ can be read off from the formulas in Proposition \ref{twistrules}. Furthermore, the summation indices agree with the statement of Theorem \ref{mainthm}; top twists produce summations from $k$ up to $j$ and right twists from $0$ to $k$.

\subsection*{Step 2: closing off}

Suppose $L$ is a two-component 2-bridge link, then the invariant of the corresponding rational tangle is a linear combination of terms of one of four forms: $UP[i,j,k]$, $OP[i,j,k]$, $RI[i,j,k]$, $OPs[i,j,k]$. The first two can be closed off by connecting lower and upper endpoints (without introducing new crossings); the latter two are closed off by connecting left to right endpoints. The resulting graphs evaluate to the ground ring $\mathbb{Z}[a^{\pm 1}](q)$ as follows:

\begin{prop} \label{closeprop} We introduce the notation ${m \brack n }_a:= \frac{q^{n(n-m)}}{a^n} \frac{((aq^m)^2,q^{-2})_n}{(q^2,q^2)_n}= \prod_{l=0}^{n-1}\frac{a q^{m-l}-a^{-1} q^{l-m}}{q^{l+1}-q^{-l-1}} $ so that, in particular, $P_n(\bigcirc)= {0\brack n}_a$. Then the closures of $UP[i,j,k]$, $OP[i,j,k]$, $RI[i,j,k]$ and $OPs[i,j,k]$ evaluate as follows:
\begin{align*}
ClUP[i,j,k] &= {-k \brack j-k }_a {-i \brack k }_a P_i(\bigcirc) = {-k \brack j-k }_a (\prod_{l=0}^{k-1}\frac{a s^{-1} q^{-j-l}-a^{-1} s q^{l+j}}{q^{l+1}-q^{-l-1}})P_i(\bigcirc)\mid_{s=q^{i-j}}  \\
 ClOP[i,j,k] &= {-k \brack j-k }_a {i \brack k} P_i(\bigcirc) ={-k \brack j-k }_a (\prod_{l=0}^{k-1}\frac{ s q^{j-l}- s^{-1} q^{l-j}}{q^{l+1}-q^{-l-1}}) P_i(\bigcirc)\mid_{s=q^{i-j}}   \\ 
 ClRI[i,j,k] &= {k-j \brack k }_a {-i \brack j-k }_a P_i(\bigcirc) = {k-j \brack k }_a (\prod_{l=0}^{j-k-1}\frac{a s^{-1} q^{-j-l}-a^{-1} s q^{l+j}}{q^{l+1}-q^{-l-1}}) P_i(\bigcirc)\mid_{s=q^{i-j}}  \\
  ClOPs[i,j,k] &= {k-j \brack k }_a {i \brack j-k} P_i(\bigcirc)={k-j \brack k }_a (\prod_{l=0}^{j-k-1}\frac{ s q^{j-l}- s^{-1} q^{l-j}}{q^{l+1}-q^{-l-1}}) P_i(\bigcirc)\mid_{s=q^{i-j}}   
 \end{align*}
\end{prop}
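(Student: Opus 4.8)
The plan is to reduce each of the four closed MOY graphs to a scalar multiple of an $i$-colored unknot using only the local evaluation relations of the HOMFLY MOY calculus, and then to read off that scalar as a product of the shifted quantum binomials ${m \brack n}_a$. The only relations I need are the circle evaluation, which in the $a$-graded skein assigns a closed $n$-colored loop the value ${0 \brack n}_a = P_n(\bigcirc)$, and the digon (bigon) removal relation, which collapses a two-edge digon bounding a region into a single edge at the cost of a shifted quantum binomial. Each of the four closures, once the prescribed boundary points are joined, is a nested configuration reducible by two successive digon removals around a central $i$-colored loop.

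First I would carry out the two removals in sequence. Closing $UP[i,j,k]$ or $OP[i,j,k]$ by connecting top to bottom produces an outer digon whose edges carry the flow labels determined by $k$ and $j-k$; removing it contributes the factor ${-k \brack j-k}_a$. The inner digon, whose labels involve $i$ and $k$, is removed next, and here the two cases diverge: for $UP$ the inner digon is oriented so that its reduction carries an $a$-shift, yielding ${-i \brack k}_a$, whereas for $OP$ the opposite relative orientation kills the shift and leaves the plain quantum binomial ${i \brack k}$. After both removals only an $i$-colored unknot survives, contributing $P_i(\bigcirc)$, which gives the first two lines. The graphs $RI[i,j,k]$ and $OPs[i,j,k]$ are instead closed by joining left to right, and the same two-step reduction applies with the roles of $k$ and $j-k$ exchanged, producing the outer factor ${k-j \brack k}_a$ together with ${-i \brack j-k}_a$ (with $a$-shift, from $RI$) or ${i \brack j-k}$ (without, from $OPs$).

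The second, purely algebraic, step is to match these $a$-binomials with the explicit products on the right-hand sides. This is the elementary check that substituting $s = q^{i-j}$ into $\prod_{l=0}^{k-1}\frac{a s^{-1} q^{-j-l} - a^{-1} s q^{l+j}}{q^{l+1} - q^{-l-1}}$ reproduces ${-i \brack k}_a$, since $a s^{-1} q^{-j-l} = a q^{-i-l}$ and $a^{-1} s q^{l+j} = a^{-1} q^{i+l}$ after the substitution, and likewise that $\prod_{l=0}^{k-1}\frac{s q^{j-l} - s^{-1} q^{l-j}}{q^{l+1} - q^{-l-1}}$ specializes to ${i \brack k}$. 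Carrying the formal variable $s$ rather than its specialized value is exactly what renders the reduced invariant independent of $i$, as asserted in part (1) of the theorem.

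I expect the main obstacle to be the orientation bookkeeping in the digon reductions. The four closures differ only through the relative flows on the two edges of each digon, and it is precisely these flows that decide whether a reduction yields an $a$-shifted binomial ${\cdot \brack \cdot}_a$ or a plain one ${\cdot \brack \cdot}$, as well as the exact top shifts $-k$, $k-j$ and $-i$. Getting every sign and shift to line up requires reading the flow on each edge of the closed graph carefully from Figure \ref{fig1}; once that is done, the evaluations follow directly from the standard relations, as recorded in section 3.3 of \cite{Wed} at $t=-1$.
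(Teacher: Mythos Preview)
Your proposal is correct and is precisely the elementary MOY skein computation the paper has in mind; the paper's own proof consists of the single sentence ``Elementary \cite{MOY} skein theory.'' Your two digon removals followed by the unknot evaluation, together with the straightforward check that the $s$-expressions specialize correctly at $s=q^{i-j}$, are exactly what that sentence is abbreviating.
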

\begin{proof}
Elementary \cite{MOY} skein theory.
\end{proof}
Note that $Cl[\partial L](i_n) := \frac{ClX[i,j,i_n]}{P_i(\bigcirc)} \in \mathbb{Z}[a^{\pm 1}, s^{\pm 1}](q)$ can  easily be read off from the right hand sides of the above equations (here we have written $X$ for the appropriate type of basis element), and as a function of $s$ $Cl[\partial L](i_n)$ is independent of $i$. This completes the proof of Theorem \ref{mainthm} in the case of two-component 2-bridge links.\\

Let $L$ be a 2-bridge knot. Then we can still use the complex for the rational tangle as above, but have to set $i=j$ and hence $s=1$. In this case $UPs[j,j,k]=UP[j,j,k]$ and $RIs[j,j,k]=RI[j,j,k]$ which allows to compute the evaluation of the closure of these graphs via the formulas in Proposition \ref{closeprop}.


\begin{thebibliography}{XXXXX}



 
 
 




\bibitem[BMS]{BMS}
A. Brini; M. Mari\~{n}o; S. Stevan: The uses of the refined matrix model recursion, J. Math. Phys. 52 no. 5 (2011) 052305.














\bibitem[Gar]{Gar}
S. Garoufalidis: The colored HOMFLY polynomial is q-holonomic. arXiv:1211.6388v1 [math.GT]

\bibitem[GL]{GL}
S. Garoufalidis; T. T. Q. L\^e: The colored Jones function is q-holonomic. Geom. Topol. 9 (2005), 1253–1293 (electronic).






\bibitem[GNSS]{GNSS}
S. Gukov; S. Nawata; M. Sto\v{s}i\'c; P. Sulkowski: in preparation.

\bibitem[GS]{GS}
S. Gukov; M. Sto\v{s}i\'c: Homological algebra of knots and BPS states. Geom. Topol. Monogr. 18 (2012) 309-367. arXiv:1112.0030 [hep-th]







  

\bibitem[Kaw]{Kaw}
K. Kawagoe: On the formulae for the colored HOMFLY polynomials. arXiv:1210.7574v2 [math.GT] 





















\bibitem[LP]{LP}
K. Liu; P. Peng: Proof of the Labastida-Mari\~no-Ooguri-Vafa Conjecture. J. Diff. Geom. 85 (2010), no. 3, 479-525. 	arXiv:0704.1526 [math.QA]









\bibitem[MOY]{MOY}
H. Murakami; T. Ohtsuki; S. Yamada: Homfly polynomial via an invariant of colored plane graphs. Enseign. Math. 2 (1998) 44:325-360.
























\bibitem[Wed]{Wed}
P. Wedrich: Categorified sl(N) invariants of colored rational tangles. arXiv:1404.2736 [math.GT]






\bibitem[WZe]{WZe}
H. S. Wilf; D. Zeilberger: An algorithmic proof theory for hypergeometric (ordinary and “q”) multisum/integral identities. Invent. Math. 108 no. 3 (1992) 575-633.

\bibitem[Zei]{Zei}
D. Zeilberger: A holonomic systems approach to specialfunctions identities, J. Comput. Appl.
Math. 32 no. 3 (1990) 321-368. 




\end{thebibliography}
\end{document}